\title{Decomposing Real Square Matrices via Unitary Diagonalization}
\author{Théo Trouillon\thanks{Xerox Research Centre Europe, 
Université Grenoble Alpes, \texttt{theo.trouillon@xrce.xerox.com}},
Christopher R. Dance\thanks{Xerox Research Centre Europe, \texttt{chris.dance@xrce.xerox.com}},
Éric Gaussier\thanks{Université Grenoble Alpes, \texttt{eric.gaussier@imag.fr}} and\\
Guillaume Bouchard\thanks{University College London, \texttt{g.bouchard@cs.ucl.ac.uk}} 
}
\date{}
\renewcommand{\cite}{\citep}
\newcommand{\complexSpace}{\mathbb{C}}
\renewcommand{\Re}{\mathbb{R}}
\newcommand{\C}{\complexSpace} 
\newcommand{\R}{\Re} 
\newcommand{\Mnr}{\R^{n \times n}}
\newcommand{\Mnc}{\C^{n \times n}}
\newcommand{\real}{\mathrm{Re}}
\newcommand{\imag}{\mathrm{Im}}
\newcommand{\sign}{\mathrm{sign}}
\newcommand{\rank}{\mathrm{rank}}
\newcommand{\srank}{\mathrm{rank}_{\pm}}
\newcommand{\T}{^\top}
\newtheorem{thm}{Theorem}
\newtheorem{defn}{Definition}
\newtheorem{cor}{Corollary}
\begin{document}
\maketitle

\begin{abstract}
Diagonalization, or eigenvalue decomposition, is very useful in many areas of applied mathematics, including signal processing and quantum physics. Matrix decomposition is also a useful tool for approximating matrices as the product of a matrix and its transpose, which relates to unitary diagonalization. As stated by the spectral theorem, \emph{only} normal matrices are unitarily diagonalizable. However we show that \emph{all} real square matrices are the real part of some unitarily diagonalizable matrix. 
\end{abstract}

\section{Introduction}

Matrix decomposition is a very well studied field of linear algebra,
in particular, eigenvalue decomposition has applications in many scientific areas.
A specific case arises when the eigenvectors form an orthonormal
basis. In other words, the matrix formed by the column eigenvectors is orthogonal. In this case we speak of orthogonal diagonalization.

\begin{defn}
A real square matrix $A \in \Mnr$ is orthogonally diagonalizable if it can be
written $A= S \Lambda S\T$, with $S, \Lambda \in \Mnr$, $\Lambda$ diagonal, and $S$ orthogonal: $SS\T = S\T S = I$.
\end{defn}

Decompositions of $n$-by-$n$ 
real matrices $A$ that can be expressed as $A = S \Lambda S\T$  
are especially relevant when rows and columns 
of a square matrix represent the same objects of some 
underlying problem. Examples of motivating applications include 
spectral analysis of graphs \cite{cvetkovic1997eigenspaces}, 
and decomposition of covariance matrices such as principal 
component analysis \cite{Jolliffe:1986}.

The spectral theorem tells us that a matrix 
is orthogonally diagonalizable if and only if it is symmetric.
However in many cases asymmetric square matrices arise,
and yet it would still be useful to have a symmetric 
decomposition of these matrices. This is frequently the case in
machine learning problems, for example: 
learning vectorial representations of words from co-occurence matrices
\cite{pennington2014glove}, and of entities from knowledge graphs
\cite{nickel2015review}. 
In these applications, row $i$ of the matrix represent the
same object as column $i$. Therefore it would be natural to have unique 
representations, or embeddings, $s_i$ for each object. 
But representing element $A_{ij}$ of the matrix by $s_i\T \Lambda s_j$ would yield
a symmetric matrix $A$.
To avoid this problem, 
practitioners either use sophisticated transformations of the original 
matrix to symmetrize it, or they consider more complicated decompositions,
or they simply use non-symmetric decompositions like singular value decomposition.


We introduce a new decomposition using unitary
diagonalization, the generalization of orthogonal diagonalization
to the complex field. This allows decomposition of \emph{arbitrary} real square matrices
with unique representations of rows and columns. 
Let us first recall some notions of complex linear algebra.

Let $\overline{A} \in \C^{m \times n}$ denote the complex conjugate of the complex 
matrix $A $. We shall write $A^* \in \C^{n \times m}$ the conjugate-transpose $A^*= (\overline{A})\T 
= \overline{A\T}$. The conjugate transpose is also often written $A^\dagger$.

\begin{defn}
A complex square matrix $A \in \Mnc$ is unitarily diagonalizable if it can be
written as $A= S \Lambda S^*$, with $S, \Lambda \in \Mnc$, $\Lambda$ diagonal, and $S$ unitary: $SS^* = S^*S = I$.
\end{defn}

\begin{defn}
A complex square matrix $A$ is normal if it commutes with its
conjugate-transpose $AA^* = A^*A$.
\end{defn}



We can now state the less well-known version of the spectral theorem that applies to the complex domain.

\begin{thm}[Spectral Theorem {{\cite{axler1997linear}}}]
\label{spectral_thm}

Suppose $A$ a complex square matrix. Then A is unitarily diagonalizable if and only if
A is normal.

\end{thm}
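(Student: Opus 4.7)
The plan is to prove the two directions of the equivalence separately, with the forward direction being essentially a one-line calculation and the reverse direction relying on the Schur decomposition.

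For the easy direction, suppose $A = S\Lambda S^*$ is unitarily diagonalizable. Then $A^* = S\Lambda^* S^*$, so one computes
\begin{equation*}
AA^* = S\Lambda S^* S \Lambda^* S^* = S\Lambda \Lambda^* S^* \quad\text{and}\quad A^*A = S\Lambda^*\Lambda S^*.
\end{equation*}
Since $\Lambda$ is diagonal, $\Lambda\Lambda^* = \Lambda^*\Lambda$, so $AA^* = A^*A$ and $A$ is normal.

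For the reverse direction, the main tool is the Schur decomposition: every $A \in \Mnc$ admits a factorization $A = UTU^*$ where $U$ is unitary and $T$ is upper triangular. One proves this by induction on $n$, peeling off one eigenvector at a time (an eigenvector exists by the fundamental theorem of algebra applied to the characteristic polynomial, then one extends it to an orthonormal basis using Gram--Schmidt, and the recursion is applied to the resulting $(n-1)\times(n-1)$ block).

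Now assume $A$ is normal and write $A = UTU^*$. The key step is to show $T$ itself is normal: indeed $TT^* = U^*AA^*U = U^*A^*AU = T^*T$. The final step, which I expect to be the main obstacle, is the lemma that a normal upper triangular matrix must be diagonal. This is proved by induction on the row index: comparing the $(1,1)$ entry of $TT^*$ and $T^*T$ gives $\sum_j |T_{1j}|^2 = |T_{11}|^2$, forcing $T_{1j} = 0$ for $j > 1$; then comparing the $(2,2)$ entries forces $T_{2j} = 0$ for $j > 2$; and so on. Hence $T = \Lambda$ is diagonal, and $A = U\Lambda U^*$ is the required unitary diagonalization.
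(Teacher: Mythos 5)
Your proof is correct. Note, however, that the paper does not prove this theorem at all: it is stated as a citation to Axler and used as a black box, so there is no in-paper argument to compare against. What you have written is the standard and complete proof: the forward direction by the computation $AA^* = S\Lambda\Lambda^*S^* = S\Lambda^*\Lambda S^* = A^*A$ (valid because diagonal matrices commute), and the converse via the Schur factorization $A = UTU^*$, the observation that normality is preserved under unitary similarity ($TT^* = U^*AA^*U = U^*A^*AU = T^*T$), and the diagonal-comparison induction showing a normal upper triangular matrix is diagonal. Each step checks out; in particular the induction is sound because the vanishing of the off-diagonal entries in row $m$ established at step $m$ is exactly what reduces $(T^*T)_{m+1,m+1}$ to $|T_{m+1,m+1}|^2$ at the next step. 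One small remark: since the theorem is attributed to Axler's \emph{Linear Algebra Done Right}, it is worth knowing that Axler's own proof deliberately avoids determinants and hence does not obtain eigenvalues from the characteristic polynomial as you do; he instead extracts an eigenvalue by factoring a polynomial applied to a vector. This is purely a stylistic difference and does not affect the validity of your argument.
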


Among normal matrices, there are some purely real matrices that are not
symmetric, such as the skew-symmetric matrices, and thus can be diagonalized
in the complex domain. As we only focus on \emph{real} square matrices in this work, let us
summarize all the cases where $A$ is real square and $A= S \Lambda S^*$, with $S,\Lambda \in \Mnc$, $\Lambda$ is diagonal and $S$ is unitary:

\begin{itemize}
    \item $A$ is symmetric if and only if $A$ is orthogonally diagonalizable,
    with $S$ and $\Lambda$ are purely real.
    \item $A$ is normal and non-symmetric if and only if A is unitarily 
    diagonalizable and $S$ and $\Lambda$  are \emph{not} both purely real.
    
    \item $A$ is not normal if and only if $A$ is not unitarily diagonalizable.
\end{itemize}

In the following, we generalize all three cases by showing that, for any 
$A \in \Mnr$, there exists a unitary diagonalization in the complex domain,
of which the real part equals $A$:

$$ A = \real(S \Lambda S^* ).$$

In other words, the unitary diagonalization is projected on the real subspace.

\section{Real Square Matrices}

\begin{thm}
\label{main_thm}

Suppose $A \in \Mnr$ is a real square matrix. Then there exists
a normal matrix $X \in \Mnc$ such that $ \real(X) = A$.

\end{thm}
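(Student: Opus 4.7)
The plan is to exhibit an explicit normal complex matrix $X$ with $\real(X)=A$, sidestepping the full machinery of finding $S$ and $\Lambda$; normality alone suffices since Theorem~\ref{spectral_thm} then gives the unitary diagonalization for free. Every $X\in\Mnc$ decomposes uniquely as $X = A + iB$ with $B\in\Mnr$, so the constraint $\real(X)=A$ is automatic and $B$ is the only free parameter; I will choose $B$ so that $X$ is normal.

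First I would compute $X^* = A\T - iB\T$ and expand the two products in $XX^* = X^*X$, separating real and imaginary parts. This reduces normality to the pair of real matrix equations
\begin{align*}
AA\T + BB\T &= A\T A + B\T B,\\
BA\T - AB\T &= A\T B - B\T A.
\end{align*}

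The natural guess is $B = A\T$. Substituting, the first equation becomes $AA\T + A\T A = A\T A + AA\T$ and the second becomes $(A\T)^2 - A^2 = (A\T)^2 - A^2$; both hold trivially. Hence $X := A + iA\T$ is normal, and evidently $\real(X) = A$, which proves the theorem.

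The only mildly nontrivial step is spotting a $B$ that works for every $A$: a priori the two commutator constraints look restrictive, but they are invariant under the swap $A \leftrightarrow B\T$, and this self-duality points immediately to the ansatz $B = A\T$. Once guessed, the verification collapses to the one-line identity $XX^* = AA\T + A\T A + i\!\left((A\T)^2 - A^2\right) = X^*X$, with no case analysis or appeal to deeper structure needed.
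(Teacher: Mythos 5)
Your proof is correct and uses exactly the same construction as the paper, namely $X = A + iA\T$. The paper verifies normality slightly more compactly by observing $X^* = A\T - iA = -iX$, so that $XX^* = X(-iX) = (-iX)X = X^*X$, but your direct expansion into real and imaginary parts establishes the same fact.
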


\begin{proof}

Let form the complex square matrix $X \triangleq A + iA\T$, and derive $X^*$:

\begin{eqnarray*}
X^* = A\T - iA = -i(iA\T + A) = -iX .
\end{eqnarray*}

Therefore $X$ is normal:
\begin{eqnarray*}
XX^* = X(-iX) = (-iX)X = X^*X.
\end{eqnarray*}

\end{proof}

Remark that there also exists a normal matrix $X = A\T + iA$ such that $ \imag(X) = A$.


Following Theorem~\ref{spectral_thm} and Theorem~\ref{main_thm}, any
real square matrix can be written as the real part of a diagonal matrix through
a unitary change of basis.

\begin{cor}
\label{cor_real_diag}

Suppose $A \in \Mnr$ is a real square matrix. Then there exist $S,\Lambda \in \Mnc$, where $S$ is unitary, and $\Lambda$ diagonal, such that $A = \real(S \Lambda S^* )$.

\end{cor}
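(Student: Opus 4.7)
The plan is to simply chain together the two results already established in the excerpt: Theorem~\ref{main_thm} produces, from any real $A$, a normal complex matrix $X$ whose real part equals $A$; and the Spectral Theorem (Theorem~\ref{spectral_thm}) then asserts that this $X$, being normal, admits a unitary diagonalization. Combining these two facts yields the desired decomposition of $A$ as $\real(S\Lambda S^*)$ almost immediately.

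More concretely, I would first invoke Theorem~\ref{main_thm} to obtain a normal matrix $X \in \Mnc$ with $\real(X) = A$; the proof of that theorem even gives the explicit choice $X = A + i A\T$, which could be substituted if one wants an explicit construction rather than just an existence statement. Next, I would apply the spectral theorem to $X$: since $X$ is normal, there exist $S,\Lambda \in \Mnc$ with $S$ unitary and $\Lambda$ diagonal such that $X = S\Lambda S^*$. Finally, taking real parts of both sides and using $\real(X)=A$ gives $A = \real(S\Lambda S^*)$, as required.

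There is essentially no obstacle here: all the nontrivial work has already been done in Theorem~\ref{main_thm} (the construction of a normal $X$ lifting $A$) and in the spectral theorem (the existence of the unitary diagonalization of a normal matrix). The only thing to be careful about is to note that $\real(\,\cdot\,)$ is $\R$-linear, so passing from $X = S\Lambda S^*$ to $\real(X) = \real(S\Lambda S^*)$ is legitimate. The proof should therefore fit comfortably in two or three lines.
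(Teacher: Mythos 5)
Your proposal is correct and is essentially identical to the paper's own proof: apply Theorem~\ref{main_thm} to obtain a normal $X$ with $\real(X)=A$, then apply Theorem~\ref{spectral_thm} to write $X=S\Lambda S^*$ and take real parts. The remark about $\R$-linearity of $\real(\cdot)$ is a harmless extra precaution that the paper leaves implicit.
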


\begin{proof}
From Theorem \ref{main_thm}, we can write $A =\real(X) $, where $X$ is a normal matrix,
and from Theorem \ref{spectral_thm}, $X$ is unitarily diagonalizable.
\end{proof}

Let us discuss the rank of such diagonalizations.
First, we recall the definition of the rank of a matrix.

\begin{defn}
$\rank(A)=k$ with $A$ an $m$-by-$n$ matrix over an arbitrary field $F$, if 
$A$ has exactly $k$ linearly independent columns.
\end{defn}

Also note that if $A$ is diagonalizable $A = S \Lambda S^{-1}$ with $\rank(A)=k$, $\Lambda$ has $k$ non-zero diagonal entries for some $\Lambda$ diagonal and some matrix $S$ invertible.
From this it is easy to derive a known additive property of the rank:
$\rank(B+C) \leq \rank(B) + \rank(C)$, with $B,C$ $m$-by-$n$ matrices over an arbitrary
field $F$.

We now show that any rank-$k$ real square matrix can be reconstructed 
from a $2k$-dimensional unitary diagonalization.

\begin{cor}

\label{cor_real_rank}

Suppose $A \in \Mnr$ arbitrary, and $rank(A) = k$. Then there exist 
$S \in \C^{n \times 2k}$, $\Lambda \in \C^{2k \times 2k}$, where 
the columns of $S$ form an orthonormal basis of $\C^{2k}$, and $\Lambda$ diagonal, such that $A = \real(S \Lambda S^* )$.
\end{cor}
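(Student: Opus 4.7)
The plan is to reuse the construction of Theorem~\ref{main_thm} together with a rank bound. Set $X \triangleq A + iA\T$. By Theorem~\ref{main_thm}, $X$ is normal, so Theorem~\ref{spectral_thm} gives a full unitary diagonalization $X = U D U^*$ with $U \in \C^{n\times n}$ unitary and $D \in \C^{n\times n}$ diagonal. Since $A = \real(X)$, it will suffice to truncate this diagonalization to $2k$ columns without losing any information.

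Next, I would bound $\rank(X)$. Using $\rank(A\T) = \rank(A) = k$ together with the subadditivity of the rank recalled just before the corollary, one obtains
\begin{equation*}
\rank(X) \;=\; \rank(A + iA\T) \;\leq\; \rank(A) + \rank(iA\T) \;=\; 2k.
\end{equation*}
Because $X = U D U^*$ is a diagonalization, the rank of $X$ equals the number of nonzero diagonal entries of $D$, so at most $2k$ of them are nonzero.

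Then I would build $S$ and $\Lambda$ by extraction. Pick any set of $2k$ indices that contains all the nonzero-eigenvalue positions (padding with some zero-eigenvalue positions if strictly fewer than $2k$ eigenvalues are nonzero), let $S \in \C^{n \times 2k}$ be the submatrix of $U$ formed by the corresponding columns, and let $\Lambda \in \C^{2k \times 2k}$ be the corresponding diagonal block of $D$. Since the columns of $U$ are orthonormal in $\C^n$, so are the columns of $S$, giving $S^* S = I_{2k}$. Moreover, discarding columns of $U$ that multiply zero diagonal entries of $D$ does not change the product, so $X = S \Lambda S^*$.

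Taking real parts and using Theorem~\ref{main_thm} finishes the argument: $A = \real(X) = \real(S \Lambda S^*)$. There is no real obstacle here; the only mildly delicate point is the rank bound $\rank(X)\le 2k$, which is why one ends up with $2k$ rather than $k$, and the book-keeping of padding to exactly $2k$ columns when $\rank(X)$ happens to be strictly smaller.
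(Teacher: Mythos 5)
Your proposal follows exactly the paper's route: form $X = A + iA\T$, bound $\rank(X) \le \rank(A) + \rank(iA\T) = 2k$ by subadditivity, and truncate the unitary diagonalization of the normal matrix $X$ to $2k$ columns. In fact you spell out the truncation-and-padding step (selecting columns of $U$ covering all nonzero eigenvalues and verifying $S^*S = I_{2k}$) more carefully than the paper does, which simply asserts the truncated form; the argument is correct, modulo the implicit assumption $2k \le n$ that the paper itself only acknowledges in the remark following the corollary.
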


\begin{proof}
Consider the complex square matrix $X \triangleq A + iA\T$. We have $\rank(iA\T) = \rank(A\T) = \rank(A) = k$.

Thus $\rank(X) \leq \rank(A) + \rank(iA\T) = 2k$.


The proof of Theorem \ref{main_thm} shows that $X$ is normal, thus: $X = S \Lambda S^* $ with $S \in \C^{n \times 2k}$, $\Lambda \in \C^{2k \times 2k}$ where 
the columns of $S$ form an orthonormal basis of $\C^{2k}$, and $\Lambda$ diagonal.

\end{proof}

Given that such decomposition always exists in dimension $n$ (Theorem \ref{main_thm}),
this upper bound is not relevant when $rank(A) \geq \frac{n}{2}$.

\section{Low sign-rank Matrices}

We interest ourselves here in square sign matrices,
$Y \in \{-1,1\}^{n \times n}$, and how they can be reconstructed using the sign function of a real matrix $X\in \Mnr$ : $Y = \sign(X)$, where $\sign(X)_{ij} = \sign(x_{ij})$. There are many such matrices $X$. More precisely, all such matrices $X$ constitute a different orthant of $\Mnr$ for each sign matrix $Y$.

The use of the sign function is here not arbitrary, as it maps to a known
complexity measure for sign matrices:  the sign-rank \cite{linial2007complexity}.

\begin{defn}
The sign-rank $\srank(Y)$  of an $m$-by-$n$ sign matrix $Y$, is the smallest rank 
among all the
$m$-by-$n$ real matrices that have the same sign-pattern as $Y$.
Formally: 
$$\srank(Y) = \min_{A\in \R^{m \times n}} \{\rank(A) | \sign(A) = Y \}.$$
\end{defn}

Using Corollary \ref{cor_real_rank}, we can now show that any square sign matrix of sign-rank $k$ can be reconstructed from a rank-$2k$ unitary diagonalization.

\begin{cor}
\label{cor_sign_rank}
Suppose $Y \in \{-1,1\}^{n \times n}$, $\srank(Y)=k$. Then there exists 
$S \in \C^{n \times 2k}$, $\Lambda \in \C^{2k \times 2k}$ where 
the columns of $S$ form an orthonormal basis of $\C^{2k}$, and $\Lambda$ diagonal,
such that $Y = \sign(\real(S \Lambda S^* ))$.
\end{cor}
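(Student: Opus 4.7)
The plan is to combine the definition of sign-rank directly with Corollary~\ref{cor_real_rank}, which already does all the heavy lifting. The proof should be essentially a one-line chain of implications, so the main task is to make sure the dimensions match and that no additional argument about the orthant structure is needed.

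First, I would unpack the hypothesis $\srank(Y) = k$. By the definition of sign-rank, this guarantees the existence of some real matrix $A \in \R^{n \times n}$ with $\rank(A) = k$ and $\sign(A) = Y$. At this point $Y$ has been replaced by a genuine real square matrix of known rank, which is exactly the input format that Corollary~\ref{cor_real_rank} requires.

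Next, I would apply Corollary~\ref{cor_real_rank} to $A$. This yields $S \in \C^{n \times 2k}$ with orthonormal columns in $\C^{2k}$ and a diagonal $\Lambda \in \C^{2k \times 2k}$ such that $A = \real(S \Lambda S^*)$. Applying $\sign$ to both sides and using $\sign(A) = Y$ then gives $Y = \sign(\real(S \Lambda S^*))$, as required.

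There is essentially no obstacle here: the result is a clean consequence of chaining two definitions with the previously established corollary. The only mild subtlety worth pointing out in the write-up is that $\sign$ is being applied entrywise, so the equality $\sign(A) = \sign(\real(S\Lambda S^*))$ follows immediately from $A = \real(S\Lambda S^*)$ without any further argument about ties or zeros (any zeros of $A$ are inherited by $\real(S\Lambda S^*)$, and $\sign$ acts the same on both).
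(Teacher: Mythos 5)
Your proof is correct and follows exactly the same route as the paper: unpack the definition of sign-rank to obtain a real $A$ with $\rank(A)=k$ and $\sign(A)=Y$, then invoke Corollary~\ref{cor_real_rank} and apply $\sign$ entrywise. The aside about zeros is moot here since $\sign(A)=Y\in\{-1,1\}^{n\times n}$ forces every entry of $A$ to be nonzero, but it does no harm.
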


\begin{proof}
By definition, if $\srank(Y)=k$, there exists a real square matrix $A$ such that
$\rank(A)=k$ and $\sign(A)=Y$.
From Corollary \ref{cor_real_rank}, $A = \real(S \Lambda S^* )$ where $S \in \C^{n \times 2k}$, $\Lambda \in \C^{2k \times 2k}$ where 
the columns of $S$ form an orthonormal basis of $\C^{2k}$, and $\Lambda$ diagonal.
\end{proof}

To the best of our knowledge, previous attempts to approximate 
the sign-rank did not use complex
numbers. They showed the existence of compact factorizations under 
conditions on the sign matrix~\cite{nickel2014reducing}, or only in specific cases~\cite{bouchard2015approximate}.
This results shows that if a sign matrix
has sign-rank $k$, then it can be reconstructed by a $2k$-dimensional
unitary diagonalization in the complex space.

\paragraph{Example\\}

Consider the following $2$-by-$2$ sign matrix:
\[
Y=
  \begin{bmatrix}
    -1 & -1\\
     \phantom{-}1 & \phantom{-}1\\
  \end{bmatrix}
\]

Not only is this matrix not normal, but one can also easily check that there is no \emph{real} normal $2$-by-$2$ matrix that has the same sign-pattern as $Y$.
Clearly, $Y$ is a rank-$1$ matrix since its columns are linearly dependent,
hence its sign-rank is also $1$. From Corollary \ref{cor_sign_rank}, we know that there is a normal matrix of which the real part has the same sign-pattern as $Y$,
and of which the rank is less than or equal to $2$. 


However, there is no such rank-$1$ unitary diagonalization of $Y$.
Otherwise we could find a 2-by-2 complex matrix $X$
such that $\real(x_{11}) < 0$ and $\real(x_{22}) > 0$, 
where $x_{11} = s_1 \lambda \bar{s}_1 = \lambda |s_1|^2$, $x_{22} = s_2 \lambda \bar{s}_2 
= \lambda |s_2|^2$,
$s \in \C^2, \lambda \in \C$. This is obviously unsatisfiable.
This example generalizes to any $n$-by-$n$ square sign matrix that only
has $-1$ on its first row and is hence rank-1, the same argument holds
considering $\real(x_{11}) < 0$ and $\real(x_{nn}) > 0$.

This shows that the upper bound
on the rank of the unitary diagonalization showed in Corollaries \ref{cor_real_rank} 
and \ref{cor_sign_rank} is strictly greater than $k$, the rank or sign-rank, 
of the decomposed matrix.

Though this decomposition is clearly not unique,
it can capture meaningful representations of the input matrix in a low-rank
decomposition setting. \citet{trouillon2016} show state-of-the-art results
by jointly decomposing a set of square sign matrices representing relational 
data.








\clearpage

\bibliography{linear_algebra_refs}

\begin{thebibliography}{9}
\providecommand{\natexlab}[1]{#1}
\providecommand{\url}[1]{\texttt{#1}}
\expandafter\ifx\csname urlstyle\endcsname\relax
  \providecommand{\doi}[1]{doi: #1}\else
  \providecommand{\doi}{doi: \begingroup \urlstyle{rm}\Url}\fi

\bibitem[Axler(1997)]{axler1997linear}
Sheldon~Jay Axler.
\newblock \emph{Linear algebra done right}, volume~2.
\newblock Springer, 1997.

\bibitem[Bouchard et~al.(2015)Bouchard, Singh, and
  Trouillon]{bouchard2015approximate}
Guillaume Bouchard, Sameer Singh, and Th\'eo Trouillon.
\newblock On approximate reasoning capabilities of low-rank vector spaces.
\newblock \emph{AAAI Spring Syposium on Knowledge Representation and Reasoning
  (KRR): Integrating Symbolic and Neural Approaches}, 2015.

\bibitem[Cvetkovi{\'c} et~al.(1997)Cvetkovi{\'c}, Rowlinson, and
  Simic]{cvetkovic1997eigenspaces}
Drago{\v{s}}~M. Cvetkovi{\'c}, Peter Rowlinson, and Slobodan Simic.
\newblock \emph{Eigenspaces of graphs}.
\newblock Number~66. Cambridge University Press, 1997.

\bibitem[Jolliffe(1986)]{Jolliffe:1986}
Ian~T. Jolliffe.
\newblock \emph{Principal Component Analysis}.
\newblock Springer Verlag, 1986.

\bibitem[Linial et~al.(2007)Linial, Mendelson, Schechtman, and
  Shraibman]{linial2007complexity}
Nati Linial, Shahar Mendelson, Gideon Schechtman, and Adi Shraibman.
\newblock Complexity measures of sign matrices.
\newblock \emph{Combinatorica}, 27\penalty0 (4):\penalty0 439--463, 2007.

\bibitem[Nickel et~al.(2014)Nickel, Jiang, and Tresp]{nickel2014reducing}
Maximilian Nickel, Xueyan Jiang, and Volker Tresp.
\newblock Reducing the rank in relational factorization models by including
  observable patterns.
\newblock In \emph{Advances in Neural Information Processing Systems}, pages
  1179--1187, 2014.

\bibitem[Nickel et~al.(2015)Nickel, Murphy, Tresp, and
  Gabrilovich]{nickel2015review}
Maximilian Nickel, Kevin Murphy, Volker Tresp, and Evgeniy Gabrilovich.
\newblock A review of relational machine learning for knowledge graphs: From
  multi-relational link prediction to automated knowledge graph construction.
\newblock \emph{arXiv preprint arXiv:1503.00759}, 2015.

\bibitem[Pennington et~al.(2014)Pennington, Socher, and
  Manning]{pennington2014glove}
Jeffrey Pennington, Richard Socher, and Christopher~D. Manning.
\newblock Glove: Global vectors for word representation.
\newblock In \emph{EMNLP}, volume~14, pages 1532--1543, 2014.

\bibitem[Trouillon et~al.(2016)Trouillon, Welbl, Bouchard, Riedel, and
  Gaussier]{trouillon2016}
Théo Trouillon, Johannes Welbl, Guillaume Bouchard, Sebastian Riedel, and Eric
  Gaussier.
\newblock {Complex embeddings for simple link prediction}.
\newblock In \emph{Proceedings of the 33rd International Conference on Machine
  Learning (ICML)}, 2016.

\end{thebibliography}
\bibliographystyle{plainnat}

\end{document}